\font\tencmmib=cmmib10 \skewchar\tencmmib '60
\def\lessim{\ \lower4pt\hbox{$
\buildrel{\displaystyle <}\over\sim$}\ }
\def\gessim{\ \lower4pt\hbox{$\buildrel{\displaystyle >}
\over\sim$}\ }
\newcommand{\la}{\langle}
\newcommand{\ra}{\rangle}
\newcommand{\e}{\mathbb{E}}
\newcommand{\Reals}{\mathbb{R}}
\newtheorem{theorem}{\bf Theorem}
\newtheorem{example}{\bf Example}
\newenvironment{Proof of lemma}{\noindent{\bf Proof of Lemma}}{\hfill$\Box$\newline}
\newenvironment{Proof of theorem}{\noindent{\it Proof of Theorem}}{\hfill\scriptsize{$\Box$}\newline}
\newenvironment{Proof of theorems}{\noindent{\bf Proof of Theorems}}{\hfill$\Box$\newline}
\newenvironment{Proof of proposition}{\noindent{\bf Proof of Proposition}}{\hfill$\Box$\newline}
\newenvironment{Proof of propositions}{\noindent{\bf Proof of Propositions}}{\hfill$\Box$\newline}
\newenvironment{Proof of exercise}{\noindent{\it Proof of Exercise:}}{\hfill$\Box$}
\title{Some examples of quenched self-averaging\\ in models with Gaussian disorder}
\author{Wei-Kuo Chen\thanks{Department of Mathematics, University of Chicago. Email: wkchen@math.uchicago.edu.}
\and
Dmitry Panchenko\thanks{Department of Mathematics, University of Toronto. Email: panchenk@math.toronto.edu.}}
\date{}
\begin{document}

\maketitle

\begin{abstract} 
In this paper we give an elementary approach to several results of Chatterjee in \cite{Chatt09, RFIM}, as well as some generalizations. First, we prove quenched disorder chaos for the bond overlap in the Edwards-Anderson type models with Gaussian disorder. The proof extends to systems at different temperatures and covers a number of other models, such as the mixed $p$-spin model, Sherrington-Kirkpatrick model with multi-dimensional spins and diluted $p$-spin model. Next, we adapt the same idea to prove quenched self-averaging of the bond magnetization for one system and use it to show quenched self-averaging of the site overlap for random field models with positively correlated spins. Finally, we show self-averaging for certain modifications of the random field itself. 
\end{abstract}
\vspace{0.5cm}
Key words: self-averaging, Gaussian disorder, spin glasses\\
Mathematics Subject Classification (2010): 60K35, 82B44

\section{Introduction}

The approach developed in this paper was motivated by several results of Chatterjee in \cite{Chatt09, RFIM}. One of the results in \cite{Chatt09} described a quenched disorder chaos for the bond overlap in the setting of the Edwards-Anderson type spin glass models. Consider a finite undirected graph $(V,E)$ and the Edwards-Anderson type Hamiltonian
\begin{equation}
H(\sigma) = \sum_{(i,j)\in E} g_{i,j}\sigma_i\sigma_j,
\label{HamEA}
\end{equation}
where $\sigma = (\sigma_i)_{i\in V} \in \{-1,+1\}^V$ and $g_{i,j}$ are i.i.d. standard Gaussian random variables. Given an inverse temperature parameter $\beta>0$, the corresponding Gibbs measure is defined by 
\begin{equation}
G(\sigma) = \frac{\exp\beta H(\sigma)}{Z},
\end{equation}
where $Z = \sum_{\sigma} \exp \beta H(\sigma)$ is called the partition function. Now, let us consider two copies of this system with different disorder parameters $(g_{i,j}^1)$ and $(g_{i,j}^2)$. We will denote the Hamiltonians and Gibbs measures of these systems by $H_1(\sigma), H_2(\rho)$ and $G_1(\sigma), G_2(\rho)$. Suppose that the disorder parameters of these two systems are correlated,
\begin{equation}
\e g_{i,j}^1 g_{i,j}^2 = t,
\end{equation}
for some $t\in [0,1].$ We still assume that $(g_{i,j}^1, g_{i,j}^2)$ are independent for different $(i,j)\in E.$ When $t=1$, this gives us two copies of the same system, and the interesting case is when $t$ is slightly smaller than one, so the interaction parameters of these two systems are slightly decoupled. Note that in \cite{Chatt09} and \cite{Chatt14} the correlation was written as $e^{-2s}$ for $s\in [0,\infty)$, which is the same as our $t=e^{-2s}$. Consider i.i.d. samples $(\sigma^\ell)_{\ell\geq 1}$ from $G_1$ and $(\rho^\ell)_{\ell\geq 1}$ from $G_2$. The quantity 
\begin{equation}
Q_{\ell,\ell'} = \frac{1}{|E|} \sum_{(i,j)\in E} \sigma_i^\ell \sigma_j^\ell \rho_i^{\ell'} \rho_j^{\ell'}
\end{equation}
is called the bond overlap of configurations $\sigma^\ell$ and $\rho^{\ell'}$, which is a measure of similarity between bonds in these two configurations. Of course, one can similarly define the bond overlap of $\sigma^\ell$ and $\sigma^{\ell'}$, but here one is interested in the behavior of the bond overlap between two slightly decoupled systems. Up to a normalization factor $|E|$, the bond overlap is the covariance
$$
\e H(\sigma^\ell) H(\rho^{\ell'}) = |E| Q_{\ell,\ell'}
$$
of the Hamiltonian $H$ in (\ref{HamEA}). Let us denote by $\la\,\cdot\,\ra$ the average with respect to $(G_1\times G_2)^{\otimes \infty}$. 

\smallskip
In Theorem $1.7$ in \cite{Chatt09} (see Theorem $11.5$ in \cite{Chatt14}), Chatterjee proved that, for any $0<t< 1$,
\begin{align}
\e \Bigl\la \Bigl(Q_{1,1}-\bigl\la Q_{1,1}\bigr\ra \Bigr)^2\Bigr\ra
&\leq \frac{2\sqrt{2}}{\beta t^{1/4}\sqrt{|E| \log (1/t)}}.
\label{eqChatt1}
\end{align} 

\medskip
\noindent
This shows that for $t<1$ and large $|E|$, the bond overlap $Q_{1,1}$ between replicas from these two systems concentrates around its Gibbs average $\la Q_{1,1}\ra$. The first goal of this paper will be to give an elementary proof of essentially the same inequality,
\begin{align}
\e \Bigl\la \Bigl(Q_{1,1}-\bigl\la Q_{1,1}\bigr\ra \Bigr)^2\Bigr\ra
&\leq \frac{8}{\beta\sqrt{|E|(1-t)}},
\label{main1}
\end{align} 
as well as some generalizations. First of all, in addition to the proof being elementary, we get a better dependence on $t$ when $t$ approaches zero, which covers the case $t=0$. In the case when $t$ is close to $1$, the dependence on $t$ is the same, since $\log(1/t)$ is of order $1-t$ in that case. Moreover, the same proof will give us quenched disorder chaos for two systems with different inverse temperature parameters $\beta_1$ and $\beta_2$, in which case (\ref{main1}) will be replaced by
\begin{align}
\e \Bigl\la \Bigl(Q_{1,1}-\bigl\la Q_{1,1}\bigr\ra \Bigr)^2\Bigr\ra
&\leq \frac{4(\beta_1+\beta_2)}{\beta_1\beta_2\sqrt{|E|(1-t)}}.
\label{main2}
\end{align} 
It is not clear to us how to extend Chatterjee's proof to this case, since it seems to rely on the symmetry between two systems in an essential way.  In Section \ref{SecLabDIS}, we will formulate a general disorder chaos result that will cover other examples in addition to the Edwards-Anderson type models, such as the mixed $p$-spin model, Sherrington-Kirkpatrick model with multi-dimensional spins, and diluted $p$-spin model. 

\smallskip
In the second paper, \cite{RFIM}, Chatterjee studied the random field Ising model on the $d$-dimensional lattice with the Hamiltonian
\begin{equation}
H(\sigma) = \beta \sum_{i\sim j}\sigma_i\sigma_j + h\sum_{i} g_i \sigma_i,
\end{equation}
where $\sigma\in\{-1,+1\}^V$ for $V= \mathbb{Z}^d\cap [1,N]^d$, $i\sim j$ means that $i$ and $j$ are neighbors on this lattice, $\beta, h>0,$ and $g_i$ are i.i.d. standard Gaussian random variables. The main goal in \cite{RFIM} was to show that for almost all values $\beta$ and $h$, in the thermodynamic limit, the site overlap
$$
R_{1,2} = \frac{1}{|V|} \sum_{i\in V} \sigma_i^1\sigma_i^2
$$
between two replicas $\sigma^1$ and $\sigma^2$ concentrates around a constant that depends only on $\beta$ and $h$. We are not going to reproduce the entire proof, but will give simplified proofs of two key steps. The first key step was to show quenched self-averaging of the overlap,
\begin{align}
\e \bigl\la \bigl(R_{1,2}-\la R_{1,2}\ra \bigr)^2 \bigr\ra\leq \frac{2\sqrt{2+h^2}}{h\sqrt{|V|}},
\end{align}
as a consequence of positive correlation of spins, which in this model follows from the FKG inequality \cite{FKG}. Our approach in Section \ref{SecLabelFKG} will also remove the factor $\sqrt{2+h^2}$. It will be based on some general result about quenched self-averaging of the bond magnetization for one system in Section \ref{SecLabelMag}. 

\smallskip
Another key step in \cite{RFIM} was to show that the normalized random field
$$
h(\sigma) = \frac{1}{|V|} \sum_{i\in V} g_i \sigma_i
$$
concentrates around its quenched average $\la h(\sigma)\ra$,
\begin{equation}
\e \bigl( \bigl\la h(\sigma)^2\bigr\ra - \bigl\la h(\sigma)\bigr\ra^2 \bigr)
\leq
\frac{\sqrt{24}}{h\sqrt{|V|}} + \frac{1}{|V|}.
\label{EqCh3}
\end{equation}
This step holds more generally and does not depend on the condition that the spins are positively correlated. Again, we will give a simplified proof of a slightly improved bound in Section \ref{SecLabHam} (see equations (\ref{eqlast}) and (\ref{eqlast2})), as well as certain generalizations (the most general statement appears in Theorem \ref{ThLast} in that section). All the proofs will be variations of the same idea and will follow the same simple pattern.

\section{Quenched disorder chaos}\label{SecLabDIS}
We will formulate the main result of this section in a way that will cover a number of models as examples. We will consider two systems with the Hamiltonians
\begin{align}
\label{eq1}
Y_1(\sigma)&=\sum_{e\in E}g_e^1f_e(\sigma),\\
Y_2(\rho)&=\sum_{e\in E}g_e^2f_e(\rho),
\end{align}
defined on the same measurable space $(\Sigma,\mathcal{F})$ (i.e. both $\sigma, \rho\in \Sigma$), which will usually be some finite set. Here the set $E$ is some finite index set, $(f_e)_{e\in E}$ is a family of measurable functions $f_e:\Sigma \rightarrow [-1,1]$, and $(g_e^1,g_e^2)$ are independent Gaussian random pairs for $e\in E$ such that
\begin{equation}
\e (g_e^1)^2=\e (g_e^2)^2=1 \,\,\mbox{ and }\,\, \e g_e^1 g_e^2=t
\label{tcorr}
\end{equation}
for some $t\in[0,1].$ We can allow the functions $(f_e)_{e\in E}$ be random as long as their randomness is independent of the Gaussian random variables $(g_e^1,g_e^2)$, but in all the examples below they will be non-random. In some models, such as diluted models, the cardinality of the index set $E$ can be random as well and, in that case, we will also assume it to be independent of the Gaussian random variables $(g_e^1,g_e^2)$. We will state our result for a fixed $E$, since one can average in $|E|$ later, as we will do, for example, in the diluted models.

Next, we consider the corresponding Gibbs measures $G_1$ and $G_2$ on $(\Sigma,\mathcal{F})$,
\begin{align}
dG_1(\sigma)&=\frac{\exp \gamma_1 Y_1(\sigma)}{Z_1} d\mu_1(\sigma),
\label{Geq1}
\\
dG_2(\rho)&=\frac{\exp\gamma_2 Y_2(\rho)}{Z_2} d\mu_2(\rho),
\end{align}
where $\gamma_1,\gamma_2> 0$ are some fixed constants, $\mu_1$ and $\mu_2$ are random finite measures on $(\Sigma,\mathcal{F})$ and $Z_1,Z_2$ are the partition functions. The randomness of $\mu_1$ and $\mu_2$ should be independent of the Gaussian random variables $(g_e^1, g_e^2)$ but not necessarily of other random variables or each other. As above, we will consider i.i.d. replicas $(\sigma^\ell)_{\ell\geq 1}$ from $G_1$ and $(\rho^\ell)_{\ell\geq 1}$ from $G_2$, let $\la\,\cdot \, \ra$ denote the average with respect to $(G_1\times G_2)^{\otimes \infty}$, and define the overlaps by
\begin{align}
Q_{\ell,\ell'}&=\frac{1}{|E|}\sum_{e\in E}f_e(\sigma^{\ell})f_e(\rho^{\ell'}).
\end{align}
Then the following quenched disorder chaos for the overlap holds.
\begin{theorem}\label{thm1}
If $\gamma_1,\gamma_2>0$ and $t\in[0,1)$ then
\begin{align}
\begin{split}
\label{thm1:eq1}
\e\Bigl \la \Bigl(Q_{1,1}-\bigl\la Q_{1,1}\bigr\ra \Bigr)^2 \Bigr\ra
&\leq \frac{4(\gamma_1+\gamma_2)}{\gamma_1\gamma_2\sqrt{|E|(1-t)}}.
\end{split}
\end{align}
\end{theorem}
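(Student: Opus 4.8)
The plan is to reduce the Gibbs variance to a sum over pairs of edges, split it into a system‑1 part and a system‑2 part (this is where the factors $1/\gamma_1$ and $1/\gamma_2$ are born), and control the pieces by a single monotone bounded functional of $t$. First I would use two independent replica pairs to write $\e\la(Q_{1,1}-\la Q_{1,1}\ra)^2\ra=\e\la Q_{1,1}^2\ra-\e\la Q_{1,1}Q_{2,2}\ra$, expand both in terms of the thermal averages, and insert $\la f_ef_{e'}\ra_i=\mathrm{Cov}_i(f_e,f_{e'})+\la f_e\ra_i\la f_{e'}\ra_i$ for $i=1,2$. After the cross‑terms cancel this gives the exact identity
\[
\e\la(Q_{1,1}-\la Q_{1,1}\ra)^2\ra=\frac1{|E|^2}\sum_{e,e'}\e\Bigl[\mathrm{Cov}_1\,\mathrm{Cov}_2+\la f_e\ra_2\la f_{e'}\ra_2\,\mathrm{Cov}_1+\la f_e\ra_1\la f_{e'}\ra_1\,\mathrm{Cov}_2\Bigr],
\]
a sum of three nonnegative contributions (writing $\mathrm{Cov}_i=\mathrm{Cov}_i(f_e,f_{e'})$).

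The engine is the functional $\Psi(t)=\e\la Q_{1,1}\ra$. Differentiating in $t$ and applying Gaussian integration by parts in each pair $(g_e^1,g_e^2)$—whose only $t$‑dependence is the covariance $t$—and then symmetrizing over replica relabelings in both systems, I expect
\[
\Psi'(t)=\frac{\gamma_1\gamma_2|E|}{4}\,\e\bigl\la\bigl(Q_{1,1}+Q_{2,2}-Q_{1,2}-Q_{2,1}\bigr)^2\bigr\ra\ge0,
\]
so that $\Psi$ is nondecreasing; since $|Q_{1,1}|\le1$ gives $|\Psi|\le1$, one gets $\int_0^1\Psi'\le2$. The first term of the identity is exactly $\Psi'(t)/(\gamma_1\gamma_2|E|)$, hence is controlled by $\Psi$ and is lower order (scale $1/|E|$). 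The key structural fact driving the rest is the derivative‑of‑Gibbs identity $\partial_{g_e^i}\la\,\cdot\,\ra_i=\gamma_i\,\mathrm{Cov}_i(\,\cdot\,,f_e)$, which is what lets me replace each covariance by a $g^i$‑derivative at the cost of a factor $1/\gamma_i$, producing the $(\gamma_1+\gamma_2)/(\gamma_1\gamma_2)=1/\gamma_1+1/\gamma_2$ dependence.

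The two remaining terms are genuine single‑system objects: the second equals $\e\,\mathrm{Var}_1(\varphi_2)$ with $\varphi_2=|E|^{-1}\sum_e\la f_e\ra_2 f_e$, i.e. the quenched self‑averaging of a magnetization in system $1$ whose weights $\la f_e\ra_2$ come from system $2$, and the third is its mirror image with $1$ and $2$ swapped. Each of these I would treat by the same pattern: the replica identity $\e\,\mathrm{Var}_1(\varphi_2)=\tfrac12\e\la(\varphi_2(\sigma^1)-\varphi_2(\sigma^2))^2\ra$, Gaussian integration by parts (which produces the prefactor $1/\gamma_1$), a Cauchy–Schwarz over the index $(e,e')$ that trades one power of $|E|$ for $\sqrt{|E|}$, and the fact that the weights $\la f_e\ra_2$ are only weakly coupled to system $1$—their dependence on $g^1$ enters solely through the correlation $t$—which supplies the $1/\sqrt{1-t}$. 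These two terms carry the dominant $1/\sqrt{|E|(1-t)}$ behaviour.

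The hard part will be this last linkage. The variance lives in the antisymmetric direction $Q_{1,1}-Q_{2,2}$, which under $\e\la\,\cdot\,\ra$ is orthogonal to the symmetric combination $W=Q_{1,1}+Q_{2,2}-Q_{1,2}-Q_{2,1}$ that drives $\Psi'$ (indeed $\e\la(Q_{1,1}-Q_{2,2})W\ra=0$), so the $t$‑derivative of the first moment cannot by itself bound the variance. The temperature integration by parts is therefore essential to turn the remainder terms into quantities that Cauchy–Schwarz can compare against $\e\la W^2\ra$, and the passage from the integral bound $\int_0^1\e\la W^2\ra\le 8/(\gamma_1\gamma_2|E|)$ to a pointwise $\e\la W^2\ra\le 8/(\gamma_1\gamma_2|E|(1-t))$ requires the monotonicity of $s\mapsto\e\la W^2\ra$ (equivalently, convexity of $\Psi$). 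Getting the square‑root singularity to come out with the precise constant $4$, and justifying that convexity, are the delicate points.
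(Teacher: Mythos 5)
Your reduction of the Gibbs variance to the three nonnegative terms is correct, as is the formula $\Psi'(t)=\tfrac14\gamma_1\gamma_2|E|\,\e\la W^2\ra$ with $W=Q_{1,1}-Q_{1,2}-Q_{2,1}+Q_{2,2}$, and the two weighted-magnetization terms are indeed where the dominant $1/\sqrt{|E|(1-t)}$ must come from. But as written the argument has two genuine gaps. First, your bound on the term $|E|^{-2}\sum_{e,e'}\e\,\mathrm{Cov}_1\mathrm{Cov}_2=\tfrac14\e\la W^2\ra$ rests on passing from $\int_0^1\Psi'\le 2$ to a pointwise bound, which needs $s\mapsto\e\la W^2\ra_s$ nondecreasing, i.e.\ convexity of $\Psi$; you flag this yourself, and it is not a routine fact --- $\Psi''$ involves third-order replica correlations with no evident sign. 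The detour is also unnecessary: the sum of your first and second terms equals $\e\la Q_{1,1}^2-Q_{1,1}Q_{2,1}\ra$, which the paper bounds in one stroke, and since your first term is nonnegative it is absorbed for free once that combination is controlled.

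Second, and more importantly, you never specify the mechanism that actually produces the $1/\sqrt{1-t}$. Saying that the weights $\la f_e\ra_2$ depend on $g^1$ only through the correlation $t$ is not yet an argument: if you integrate by parts against $g_e^1$ directly, the derivative also hits $G_2$ (since $\e g_e^1 g_e^2=t$) and generates cross terms carrying a factor $t\gamma_2$ that your Cauchy--Schwarz step does not control. The paper's key device is the representation $Y_1=\sqrt{t}Z+\sqrt{1-t}Z_1$, $Y_2=\sqrt{t}Z+\sqrt{1-t}Z_2$ with $Z,Z_1,Z_2$ independent, followed by testing $Q_{1,1}$ against $Z_1(\rho^1)/|E|$: because $Z_1$ is independent of system~2, Gaussian integration by parts in the $z_e^1$ yields exactly $\gamma_1\sqrt{1-t}\,\e\la Q_{1,1}^2-Q_{1,1}Q_{2,1}\ra$, while the tested quantity is at most $|E|^{-1/2}$ in absolute value by a conditional Cauchy--Schwarz, since $\rho^1$ is sampled independently of $(z_e^1)$. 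That single computation delivers both the $1/\gamma_1$ and the $1/\sqrt{1-t}$ with the stated constants; without it (or an equivalent device), your outline does not close.
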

\begin{proof} The proof is based on a simple computation first used in the derivation of the (two-system) Ghirlanda-Guerra identities for the mixed $p$-spin model in Chen, Panchenko \cite{CP12} and Chen \cite{C12} (for related results about disorder chaos, see also \cite{Chen11}). Because of the assumption (\ref{tcorr}), we can represent
\begin{align*}
Y_1(\sigma)&=\sqrt{t}Z(\sigma)+\sqrt{1-t}Z_1(\sigma),\\
Y_2(\rho)&=\sqrt{t}Z(\rho)+\sqrt{1-t}Z_2(\rho),
\end{align*}
where, given i.i.d. standard Gaussian random variables $z_e$, $z_e^1$ and $z_{e}^2$ indexed by $e\in E$,
\begin{align*}
Z(\sigma)&=\sum_{e\in E}z_ef_e(\sigma),\,\, Z(\rho)=\sum_{e\in E}z_ef_e(\rho),\\
Z_1(\sigma)&=\sum_{e\in E}z_e^1f_e(\sigma),\,\, Z_2(\rho)=\sum_{e\in E}z_e^2f_e(\rho).
\end{align*}
Let us consider the quantity
$$
\e \Bigl \la Q_{1,1} \frac{Z_1(\rho^1)}{|E|}  \Bigr\ra.
$$ 
Notice that $Z_1(\rho^1)$ is a new object, with the randomness coming from the second term in the Hamiltonian $Y_1$ on the first system, and the argument $\rho^1$ that is a replica from the second system and is averaged with respect to $G_2.$ As a result, if $\e'$ denotes the expectation in the Gaussian random variables $z_e$, $z_e^1$ and $z_{e}^2$, then
$$
\e' Y_1(\sigma^\ell) Z_1(\rho^1) = \sqrt{1-t} |E| Q_{\ell, 1},\,\,
\e' Y_2(\rho^\ell) Z_1(\rho^1) = 0,
$$
and the usual Gaussian integration by parts (see e.g. \cite{SG}, Appendix A.4) gives
\begin{align*}
\e \Bigl \la Q_{1,1} \frac{Z_1(\rho^1)}{|E|}  \Bigr\ra &=\gamma_1\sqrt{1-t}
\e\bigl\la Q_{1,1}^2 - Q_{1,1} Q_{2,1}\bigr\ra.
\end{align*}
On the other hand, since $|Q_{1,1}|\leq 1$, 
$$
\Bigl | \e \Bigl \la Q_{1,1} \frac{Z_1(\rho^1)}{|E|} \Bigr\ra \Bigr | 
\leq 
\e \Bigl \la \frac{|Z_1(\rho^1)|}{|E|} \Bigr\ra.
$$
The average on the right hand side is with respect to $dG_2(\rho^1)$ only, which is independent of the Gaussian random variables $z_e^1$ that appear in $Z_1(\rho)$, so
$$
\e \Bigl \la \frac{|Z_1(\rho^1)|}{|E|} \Bigr\ra
=
\e \Bigl \la \frac{\e_1 |Z_1(\rho^1)|}{|E|} \Bigr\ra,
$$
where $\e_1$ is the expectation with respect to $(z_{e}^1)_{e\in E}$. Finally, since
\begin{align*}
\e_1 |Z_1(\rho^1)|\leq 
\bigl(\e_1 Z_1(\rho^1)^2 \bigr)^{1/2}
=
\bigl( \sum_{e\in E}f_e(\rho^1)^2 \bigr)^{1/2}\leq |E|^{1/2},
\end{align*}
we prove that
\begin{align}\label{thm1:proof:eq2}
\Bigl| \gamma_1\sqrt{1-t}
\e\bigl\la Q_{1,1}^2 - Q_{1,1} Q_{2,1}\bigr\ra\Bigr|
=
\Bigl | \e \Bigl \la Q_{1,1} \frac{Z_1(\rho^1)}{|E|} \Bigr\ra \Bigr | 
\leq \frac{1}{\sqrt{|E|}}.
\end{align}
Next, by symmetry, $\la Q_{2,1}^2 \ra = \la Q_{1,1}^2\ra$ and, therefore, 
\begin{align*}
\e \bigl\la (Q_{1,1}-Q_{2,1})^2 \bigr\ra = 2 \e \bigl\la Q_{1,1}^2 - Q_{1,1}Q_{2,1} \bigr\ra 
\leq \frac{2}{\gamma_1\sqrt{|E|(1-t)}},
\end{align*}
where in the last inequality we used (\ref{thm1:proof:eq2}). Similarly, one can show that
\begin{align*}
\e \bigl\la (Q_{2,2}-Q_{2,1})^2 \bigr\ra  
\leq \frac{2}{\gamma_2\sqrt{|E|(1-t)}}.
\end{align*}
Combining the above two inequalities and using Jensen's inequality,
\begin{align*}
\e \bigl\la \bigl(Q_{1,1}- \la Q_{1,1} \ra \bigr)^2 \bigr\ra
&\leq  \e \bigl\la (Q_{1,1}-Q_{2,2})^2 \bigr\ra \\
& \leq 2 \e \bigl\la (Q_{1,1}-Q_{2,1})^2 \bigr\ra   
 +2\e \bigl\la (Q_{2,2}-Q_{2,1})^2 \bigr\ra \\
&\leq \frac{4}{\gamma_1\sqrt{|E|(1-t)}}+\frac{4}{\gamma_2\sqrt{|E|(1-t)}}.
\end{align*}
This finishes the proof.
\end{proof}

We will now give several examples of applications of Theorem \ref{thm1}. Since all the arguments are very similar, we will only give a detailed discussion of the mixed $p$-spin model.

\begin{example}[mixed $p$-spin model]\rm The Hamiltonian of the mixed $p$-spin model is given by
\begin{align*}
H(\sigma)&=\sum_{p\geq 1}\frac{\beta_p}{N^{(p-1)/2}}\sum_{1\leq i_1,\ldots,i_p\leq N}g_{i_1,\ldots,i_p}\sigma_{i_1}\cdots\sigma_{i_p},
\end{align*}
where $\sigma\in \Sigma_N:=\{-1,+1\}^N$, $(\beta_{p})_{p\geq 1}$ is a sequence of inverse temperature parameters such that $\beta_p\geq 0$ for all $p\geq 1$ and $\sum_{p\geq 1}2^p\beta_p^2<\infty$, and $g_{i_1,\ldots,i_p}$ are i.i.d standard Gaussian for all $p\geq 1$ and all $1\leq i_1,\ldots,i_p\leq N$. Let us now consider two such systems,
\begin{align*}
H_1(\sigma)&=\sum_{p\geq 1}\frac{\beta_{1,p}}{N^{(p-1)/2}}\sum_{1\leq i_1,\ldots,i_p\leq N}g_{i_1,\ldots,i_p}^1\sigma_{i_1}\cdots\sigma_{i_p},\\
H_2(\rho)&=\sum_{p\geq 1}\frac{\beta_{2,p}}{N^{(p-1)/2}}\sum_{1\leq i_1,\ldots,i_p\leq N}g_{i_1,\ldots,i_p}^2\rho_{i_1}\cdots\rho_{i_p},
\end{align*} 
with the Gaussian interaction parameters coupled according to some sequence $(t_p)_{p\geq 1}$,
$$
\e (g_{i_1,\ldots,i_p}^1)^2=\e (g_{i_1,\ldots,i_p}^2)^2=1 
\,\,\mbox{ and }\,\,
\e g_{i_1,\ldots,i_p}^1g_{i_1,\ldots,i_p}^2=t_p \in [0,1].
$$
Suppose that for some $p\geq 1,$ $\beta_{1,p},\beta_{2,p}>0$ and $t_{p}<1$. Let $Y_1$ and $Y_2$ be the $p$-spin terms in $H_1$ and $H_2$ correspondingly. This means that in \eqref{eq1}, we should set $E=\{1,\ldots,N\}^{p}$, for $e=(i_1,\ldots,i_p)\in E$ define $f_e(\sigma)=\sigma_{i_1}\cdots \sigma_{i_p}$ for all $\sigma\in\Sigma_N,$ and let
\begin{align*}
\gamma_1&=\frac{\beta_{1,p}}{N^{(p-1)/2}}
\,\,\mbox{ and }\,\,
\gamma_2=\frac{\beta_{2,p}}{N^{(p-1)/2}}.
\end{align*}
In this case, the bond overlap $Q_{1,1}$ will be equal to
$$
Q_{1,1} = \frac{1}{N^p} \sum_{1\leq i_1,\ldots,i_p\leq N} \sigma_{i_1}^1 \cdots\sigma_{i_p}^2 \rho_{i_1}^1\cdots\rho_{i_p}^1 = (R_{1,1})^p
$$
where $R_{1,1}=N^{-1}\sum_{i=1}^N\sigma_i^1\rho_i^1$ is the usual site overlap. Finally, we can write the Gibbs measures corresponding to $H_1$ and $H_2$ as
$$
G_1(\sigma)=\frac{\exp \gamma_1Y_1(\sigma)}{Z_1}\mu_1(\sigma),\,\,
G_2(\rho)=\frac{\exp \gamma_2Y_2(\rho)}{Z_2}\mu_2(\rho),
$$
where we denoted
$$
\mu_1(\sigma) = \exp \bigl(H_1(\sigma)-\gamma_1 Y_1(\sigma) \bigr),\,\,
\mu_2(\rho) = \exp \bigl(H_2(\rho)-\gamma_2 Y_2(\rho) \bigr).
$$
By construction, these measures are independent of the Gaussian random variables in $Y_1$ and $Y_2$. Theorem \ref{thm1} implies that
\begin{align}
\e\Bigl \la  \Bigl( (R_{1,1})^{p}-\bigl\la (R_{1,1})^{p} \bigr\ra \Bigr)^2  \Bigr\ra
&\leq\frac{4(\gamma_{1}+\gamma_{2})}{\gamma_{1}\gamma_{2}\sqrt{N^p(1-t_{p})}}
=\frac{4(\beta_{1,p}+\beta_{2,p})}{\beta_{1,p}\beta_{2,p}\sqrt{N(1-t_{p})}}.
\end{align}
Clearly, for odd $p$ this implies that $R_{1,1}\approx \la R_{1,1}\ra$ and for even $p$ this implies that $|R_{1,1}| \approx \la |R_{1,1}|\ra$. This example was one of the main results in \cite{CP12}.

\end{example}

\begin{example}[SK model with multidimensional spins]\rm
Let $S$ be a bounded Borel measurable subset of $\mathbb{R}^d$ and $\nu$ be a probability measure on $\mathscr{B}(S)$. Consider the configuration space
$$
\Sigma_N=\left\{(x_1,\ldots,x_N):x_1=(x_{1,u})_{1\leq u\leq d},\ldots,x_N=(x_{N,u})_{1\leq u\leq d}\in S\right\}.
$$
Consider the Hamiltonians and Gibbs measures of two SK type models with multidimensional spins on $\Sigma_N,$
\begin{align*}
H_1(\sigma)&=\frac{\beta_1}{\sqrt{N}}\sum_{1\leq i,j\leq N}g_{i,j}^1(\sigma_i,\sigma_j),\,\,
dG_1(\sigma)=\frac{\exp H_1(\sigma)}{Z_1}d\nu(\sigma),\\
H_2(\rho)&=\frac{\beta_2}{\sqrt{N}}\sum_{i\leq i,j\leq N}g_{i,j}^2(\rho_i,\rho_j),\,\,
dG_2(\rho)=\frac{\exp H_2(\rho)}{Z_2}d\nu(\rho),
\end{align*}
where $(a,b)$ is the scalar product on $\Reals^d$, $\beta_1,\beta_2 > 0$, and $(g_{i,j}^1,g_{i,j}^2)$ are independent Gaussian random vectors with covariance 
$$
\e (g_{i,j}^1)^2=\e(g_{i,j}^2)^2=1 \,\,\mbox{ and }\,\, \e g_{i,j}^1g_{i,j}^2=t \in [0,1].
$$ 
The bond overlap $Q_{1,1}$ will be defined in this case by
\begin{align*}
Q_{1,1}&=
\frac{1}{N^2} \sum_{1\leq i,j\leq N} (\sigma_i^1,\sigma_j^1) (\rho_i^1,\rho_j^1)
=
\sum_{u,v=1}^d\Bigl(\frac{1}{N}\sum_{i=1}^N\sigma_{i,u}^1\rho_{i,v}^1\Bigr)^2,
\end{align*}
and it is easy to see that Theorem \ref{thm1} implies that
\begin{align}
\e\Bigl\la \Bigl(Q_{1,1}-\bigl\la Q_{1,1}\bigr\ra \Bigr)^2 \Bigr\ra
&\leq \frac{4(\beta_1+\beta_2)}{\beta_1\beta_2\sqrt{N(1-t)}}
\end{align}
for $t<1$.

\end{example}
\begin{example}[Diluted $p$-spin model]\rm 
Let $\pi(\lambda N)$ be a Poisson random variable with mean $\lambda N$ and $(i_{j,k})_{j,k\geq 1}$ be i.i.d. uniform random variables on $\{1,\ldots,N\}.$ Consider two diluted $p$-spin models, 
\begin{align*}
H_1(\sigma)&=\beta_1\sum_{k\leq \pi(\lambda N)}g_k^1\sigma_{i_{1,k}}\cdots\sigma_{i_{p,k}},\,\,
G_1(\sigma)=\frac{\exp H_1(\sigma)}{Z_1},\\
H_2(\rho)&=\beta_2\sum_{k\leq \pi(\lambda N)}g_k^2\rho_{i_{1,k}}\cdots\rho_{i_{p,k}},\,\,
G_2(\rho)=\frac{\exp H_2(\rho)}{Z_2},
\end{align*}
where $\beta_1,\beta_2>0$ and $(g_k^1,g_k^2)_{k\geq 1}$ are independent Gaussian random vectors with covariance 
$$
\e(g_k^1)^2=\e (g_k^2)^2=1 \,\,\mbox{ and }\,\, \e g_k^1g_k^2=t \in [0,1].
$$ 
If we define the bond overlap $Q_{1,1}$ by 
\begin{align*}
Q_{1,1}&=\frac{1}{\pi(\lambda N)}\sum_{k=1}^{\pi(\lambda N)}\sigma_{i_{1,k}}^1\cdots\sigma_{i_{p,k}}^1\rho_{i_{1,k}}^1\cdots\rho_{i_{p,k}}^1
\end{align*}
when $\pi(\lambda N)\geq 1$, and $Q_{1,1} = 1$ (or any constant) when $\pi(\lambda N) = 0$, then applying Theorem \ref{thm1} conditionally on $\pi(\lambda N)$ and then averaging in $\pi(\lambda N)$ implies that for $t<1,$
\begin{align}
\e\Bigl\la \Bigl(Q_{1,1}-\bigl\la Q_{1,1}\bigr\ra \Bigr)^2\Bigr\ra 
&\leq \frac{4(\beta_1+\beta_2)}{\beta_1\beta_2\sqrt{1-t}}\e\frac{1}{\sqrt{\pi(\lambda N)}}{\rm I}(\pi(\lambda N)\geq 1).
\end{align}
The last expectation is of order $1/\sqrt{\lambda N}$ and, in fact, it is easy to check that it is bounded by $1/(\sqrt{\lambda N} - \sqrt{2/(\lambda N)}).$

\end{example}

\begin{example}[Edwards-Anderson model]\rm
Let $(V,E)$ be an arbitrary undirected finite graph and let $\beta_1,\beta_2,h_1,h_2\geq 0$. Consider two Edwards-Anderson models on $\{-1,+1\}^V$ with Gaussian random external fields,
\begin{align*}
H_1(\sigma)&=\beta_1\sum_{(i,j)\in E}g_{i,j}^1\sigma_i\sigma_j+h_1\sum_{i\in V}g_i^1\sigma_i,\,\,
G_1(\sigma)=\frac{\exp H_1(\sigma)}{Z_1}\\
H_2(\rho)&=\beta_2\sum_{(i,j)\in E}g_{i,j}^2\rho_i\rho_j+h_2\sum_{i\in V}g_i^2\rho_i,\,\,
G_2(\rho)=\frac{\exp H_2(\rho)}{Z_2},
\end{align*}
where $(g_{i,j}^1,g_{i,j}^2)$ are independent Gaussian random vectors with covariance 
$$
\e(g_{i,j}^1)^2=\e(g_{i,j}^2)^2=1 \,\,\mbox{ and }\,\, \e g_{i,j}^1g_{i,j}^2=t_E \in [0,1],
$$ 
$(g_i^1,g_i^2)$ are independent Gaussian random vectors with covariance 
$$
\e(g_i^1)^2=\e(g_i^2)^2=1 \,\,\mbox{ and }\,\, \e g_i^1g_i^2=t_V\in [0,1],
$$ 
and these two families of random vectors are independent of each other. From Theorem \ref{thm1}, we can deduce two kinds of quenched disorder chaos. First, if $\beta_1,\beta_2>0$ and $t_E<1,$ we obtain 
\begin{align}\label{ex2:eq1}
\e\Bigl\la \Bigl(Q_{1,1}-\bigl\la Q_{1,1}\bigr\ra \Bigr)^2\Bigr\ra & \leq \frac{4(\beta_1+\beta_2)}{\beta_1\beta_2\sqrt{|E|(1-t_E)}},
\end{align}
where $Q_{1,1}$ is the bond overlap
\begin{align*}
Q_{1,1}=\frac{1}{|E|}\sum_{(i,j)\in E}\sigma_i^1\sigma_j^1\rho_i^1\rho_j^1.
\end{align*}
If $h_1,h_2>0$ and $t_V<1$, then 
\begin{align}
\e\Bigl\la \Bigl(R_{1,1}-\bigl\la R_{1,1}\bigr\ra \Bigr)^2\Bigr\ra &\leq \frac{4(h_1+h_2)}{h_1h_2\sqrt{|V|(1-t_V)}},
\end{align}
where
$$
R_{1,1}=\frac{1}{|V|}\sum_{i\in V}\sigma_i^1\rho_i^1
$$
is the usual site overlap. The bound in (\ref{ex2:eq1}) was the one discussed in the introduction. 

\medskip
\noindent
\textbf{Remark.} In Theorem 1.6 of the same paper \cite{Chatt09}, Chatterjee also proved the following result. If $d$ is the maximum degree of the graph $(V,E)$ and
$$
q = \min\Bigl(\beta^2, \frac{1}{4d^2}\Bigr)
$$
then for some choice of absolute constant $C$,
\begin{equation}
\e \bigl\la Q_{1,1}\bigr\ra \geq Cq t^{1/(Cq)}.
\label{eqChatt2}
\end{equation}
If $d$ is fixed (for example, in the EA model on a finite dimensional lattice) and $t>0$ then (\ref{eqChatt2}) combined with (\ref{eqChatt1}) excludes the possibility that $Q_{1,1}$ concentrates near $0$ for large $|E|$, since the quenched average $\la Q_{1,1}\ra$ must be strictly positive with positive probability. This seems to be in contrast with the predictions of Fisher, Huse \cite{FH86} and Bray, Moore \cite{BM87} for the site overlap
\begin{equation}
R_{\ell,\ell'} = \frac{1}{|V|} \sum_{i\in V} \sigma_i^{\ell} \rho_i^{\ell'},
\end{equation}
which is expected to concentrate near zero when $t<1$. One interpretation of (\ref{eqChatt2}) is that there is no disorder chaos for the bond overlap. Another possible interpretation could be that the vectors $(\sigma_i^1\sigma_j^1)$ and $(\rho_i^1\rho_j^1)$ might have `preferred directions' and the overlap $\la Q_{1,1}\ra$ of their Gibbs averages $(\la \sigma_i^1\sigma_j^1 \ra)$ and $(\la \rho_i^1\rho_j^1 \ra)$ could deviate from zero but, otherwise, they have no common structure, which is some sort of weak disorder chaos. To strengthen this statement, one could also try to show that $\la Q_{1,1}\ra$ concentrates around its expected value $\e \la Q_{1,1}\ra$. 
\end{example}

\section{Self-averaging of the magnetization}\label{SecLabelMag}

From now on we will consider one system with the Hamiltonian as in (\ref{eq1}),
$$
Y(\sigma)=\sum_{e\in E}g_e f_e(\sigma),
$$
and the Gibbs measure as in (\ref{Geq1}),
$$
dG(\sigma)=\frac{\exp \gamma Y(\sigma)}{Z} d\mu(\sigma).
$$
Consider a vector $a = (a_e)_{e\in E}$ of some arbitrary constants and denote 
$$
\|a\|_2 = \Bigl(\sum_{e\in E} a_e^2\Bigr)^{1/2}
\,\,\mbox{ and }\,\,
\|a\|_1 = \sum_{e\in E} |a_e|.
$$
We will define a weighted bond magnetization by
\begin{equation}
m(\sigma) = \sum_{e\in E} a_e f_e(\sigma).
\end{equation}
The following holds.
\begin{theorem}\label{ThMag}
If $\gamma > 0$ then
\begin{equation}
\e\bigl\la \bigl(m(\sigma)-\bigl\la m(\sigma) \bigr\ra \bigr)^2\bigr\ra 
\leq  \frac{1}{\gamma} \|a\|_2 \|a\|_1.
\label{cor1:eq1}
\end{equation}
\end{theorem}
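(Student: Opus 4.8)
The plan is to mimic the Gaussian integration-by-parts argument used in the proof of Theorem~\ref{thm1}, but now for a single system. The key quantity to examine is
$$
\e\Bigl\la m(\sigma^1)\,\frac{Z_a(\sigma^1)}{1}\Bigr\ra,
\quad\text{where}\quad
Z_a(\sigma)=\sum_{e\in E}a_e g_e f_e(\sigma)=\sum_{e\in E}a_e g_e f_e(\sigma).
$$
More precisely, I would introduce the auxiliary field built from the same Gaussian disorder $(g_e)$ but weighted by the coefficients $a_e$, namely $\sum_{e\in E} a_e g_e f_e(\sigma)$, and compute $\e\la m(\sigma^1)\sum_e a_e g_e f_e(\sigma^1)\ra$ by Gaussian integration by parts. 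Since $Y(\sigma)=\sum_e g_e f_e(\sigma)$ appears in the Gibbs weight $\exp\gamma Y$, differentiating the Gibbs average in each $g_e$ produces a factor $\gamma f_e$, and the by-parts formula will yield a term proportional to $\gamma$ times the covariance of $m$ under $\la\,\cdot\,\ra$. Concretely, I expect to obtain an identity of the shape
$$
\e\Bigl\la m(\sigma^1)\sum_{e\in E} a_e g_e f_e(\sigma^1)\Bigr\ra
=\sum_{e\in E} a_e^2\,\e\la f_e(\sigma^1)^2\rangle
+\gamma\,\e\bigl\la m(\sigma^1)^2-m(\sigma^1)m(\sigma^2)\bigr\ra,
$$
where the first sum comes from differentiating $f_e(\sigma^1)$ against its own Gaussian $g_e$ and is bounded by $\|a\|_2^2$ since $|f_e|\le 1$, while the second (replica) term is exactly $\gamma$ times the Gibbs variance of $m$ up to symmetrization.

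Next I would bound the left-hand side directly. The point is that $\sum_e a_e g_e f_e(\sigma^1)$ can be controlled in absolute value using $|f_e|\le 1$, giving $\bigl|\sum_e a_e g_e f_e(\sigma^1)\bigr|\le \sum_e |a_e|\,|g_e|=\|a\|_1$-type control after taking expectations in the $g_e$; more carefully, $\e_g\bigl|\sum_e a_e g_e f_e(\sigma)\bigr|\le \bigl(\sum_e a_e^2 f_e(\sigma)^2\bigr)^{1/2}\le \|a\|_2$ by Cauchy--Schwarz in the Gaussian, which is the cleaner route. Combining $|m(\sigma^1)|\le \|a\|_1$ with this $\|a\|_2$ bound on the auxiliary field (or the reverse pairing) gives an upper bound of order $\|a\|_2\|a\|_1$ on the left-hand side. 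Rearranging the identity then isolates
$$
\gamma\,\e\bigl\la m(\sigma^1)^2-m(\sigma^1)m(\sigma^2)\bigr\ra
\le \|a\|_2\|a\|_1,
$$
after the first sum $\sum_e a_e^2\e\la f_e^2\ra\le\|a\|_2^2\le\|a\|_2\|a\|_1$ is absorbed; finally, by the replica identity $\e\la(m(\sigma^1)-\la m\ra)^2\ra=\e\la m(\sigma^1)^2-m(\sigma^1)m(\sigma^2)\ra$, dividing by $\gamma$ yields the claimed bound \eqref{cor1:eq1}.

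The main obstacle I anticipate is the precise bookkeeping in the integration-by-parts step: one must correctly separate the ``diagonal'' contribution (where $g_e$ differentiates the explicit $f_e(\sigma^1)$ sitting in $m$) from the ``Gibbs'' contribution (where $g_e$ differentiates the Boltzmann weight), and verify that the diagonal term is genuinely subordinate to $\|a\|_2\|a\|_1$ rather than spoiling the bound. A secondary subtlety is choosing how to pair the two factors $m(\sigma^1)$ and $\sum_e a_e g_e f_e(\sigma^1)$ when bounding the left-hand side so that the product of norms comes out as $\|a\|_2\|a\|_1$ and not $\|a\|_2^2$ or $\|a\|_1^2$; the asymmetry in \eqref{cor1:eq1} suggests bounding $|m(\sigma^1)|$ crudely by $\|a\|_1$ and the auxiliary field in $L^2$ by $\|a\|_2$, which is exactly the combination that appears.
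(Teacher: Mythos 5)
There is a genuine error in your key integration-by-parts identity, and it traces back to the choice of auxiliary quantity. You pair $m(\sigma^1)$ with $Z_a(\sigma^1)=\sum_{e}a_eg_ef_e(\sigma^1)$, i.e.\ you keep the factors $f_e(\sigma^1)$ inside the auxiliary field. Writing $\e\la m(\sigma^1)Z_a(\sigma^1)\ra=\sum_e a_e\,\e\bigl[g_e\bigl\la m(\sigma^1)f_e(\sigma^1)\bigr\ra\bigr]$ and integrating by parts in $g_e$, the only $g_e$-dependence is through the Gibbs weight (neither $m(\sigma^1)$ nor $f_e(\sigma^1)$ contains $g_e$ explicitly), so what you actually get is
$$
\gamma\sum_{e\in E}a_e\,\e\bigl[\bigl\la m(\sigma^1)f_e(\sigma^1)^2\bigr\ra-\bigl\la m(\sigma^1)f_e(\sigma^1)\bigr\ra\bigl\la f_e(\sigma^2)\bigr\ra\bigr].
$$
This is not $\sum_e a_e^2\e\la f_e(\sigma^1)^2\ra+\gamma\,\e\la m(\sigma^1)^2-m(\sigma^1)m(\sigma^2)\ra$: there is no diagonal term at all (that would require a second explicit factor of $g_e$ for the differentiation to hit), and the surviving terms involve $f_e(\sigma^1)^2$ and the cross-products $f_e(\sigma^1)f_e(\sigma^2)$ rather than the products $f_e(\sigma^1)f_{e'}(\sigma^1)$ and $f_e(\sigma^1)f_{e'}(\sigma^2)$ over all pairs $e,e'$ that would be needed to reassemble $\la m^2\ra-\la m(\sigma^1)m(\sigma^2)\ra$. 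So the identity you rearrange at the end is false and the argument does not close.

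The fix is small but essential, and it is exactly what the paper does: take the auxiliary variable to be the plain Gaussian $g=\sum_{e\in E} a_eg_e$, with no $f_e(\sigma)$ attached. Then $\e\la m(\sigma^1)g\ra=\sum_e a_e\,\e\bigl[g_e\la m(\sigma^1)\ra\bigr]$, and since $\partial_{g_e}\la m(\sigma^1)\ra=\gamma\bigl(\la m(\sigma^1)f_e(\sigma^1)\ra-\la m(\sigma^1)\ra\la f_e(\sigma^1)\ra\bigr)$, summing against $a_e$ produces exactly $\gamma\,\e\bigl(\la m(\sigma)^2\ra-\la m(\sigma)\ra^2\bigr)$ with no spurious terms. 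The other side then goes through with the pairing the paper uses: pull $g$ out of the bracket, bound $|\la m(\sigma^1)\ra|\le\|a\|_1$ and $\e|g|\le(\e g^2)^{1/2}=\|a\|_2$. Your instinct about which factor to bound in $L^1$ and which in $L^2$ was correct; it is the choice of auxiliary field that breaks the proof.
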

\begin{proof} If we consider the random variable $ g= \sum_{e} a_e g_e$ then Gaussian integration by parts gives
\begin{align*}
 \e \bigl \la m(\sigma^1) g \bigr\ra
& =
 \gamma\,  \e \bigl \la m(\sigma^1)^2 - m(\sigma^1) m(\sigma^2) \bigr\ra
\\
& =
\gamma\, \e\bigl\la \bigl(m(\sigma)-\bigl\la m(\sigma) \bigr\ra \bigr)^2\bigr\ra.
\end{align*}
On the other hand, since
\begin{align}
\label{eq:add}
|\la m(\sigma^1)\ra|&\leq \sum_{e\in E}|a_e||\la f_e(\sigma^1)\ra|\leq \|a\|_1,
\end{align}
we can write
$$
\bigl | \e \bigl \la m(\sigma^1) g \bigr\ra \bigr| =\bigl | \e \bigl \la m(\sigma^1) \bigr\ra g\bigr|
\leq \|a\|_1\e |g| \leq \|a\|_1 (\e g^2)^{1/2} = \|a\|_2 \|a\|_1
$$
and the proof follows.
\end{proof}

\begin{example}\rm
Consider the mixed $p$-spin model as in the Example 1 above. Let us consider $b_1,\ldots,b_N$ such that $\sum_{i=1}^N |b_i|=1.$  If we denote $\gamma=\beta_p/N^{(p-1)/2}$, let 
$$
\mbox{
$f_e(\sigma)=\sigma_{i_1}\cdots\sigma_{i_p}$ and $a_e=b_{i_1}\cdots b_{i_p}$ for $e=(i_1,\ldots,i_p)\in E=\{1,\ldots,N\}^p$ 
}
$$
then the bond magnetization is given by
\begin{align*}
m(\sigma)=
\sum_{1\leq i_1,\ldots,i_p\leq N} b_{i_1}\cdots b_{i_p} \sigma_{i_1}\cdots\sigma_{i_p}
=
\Bigl(\sum_{i=1}^Nb_i \sigma_i\Bigr)^p
\end{align*}
and \eqref{cor1:eq1} implies that, for $\beta_p>0$,
\begin{align}
\e\bigl\la \bigl(m(\sigma)-\bigl\la m(\sigma) \bigr\ra \bigr)^2\bigr\ra 
\leq 
\frac{\|a\|_2}{\gamma}=\frac{N^{(p-1)/2}\|b\|_2^p}{\beta_p}.
\end{align}
If we take $b_i = 1/N$, the bound becomes $(\beta_p \sqrt{N})^{-1}$ and $m(\sigma)$ is the $p$th power of the usual total site magnetization $N^{-1}\sum_{i\leq N} \sigma_i$. For odd $p$, this implies quenched self-averaging for the total site magnetization and, for even $p$, quenched self-averaging for its absolute value.
\end{example}

\section{Self-averaging of the site overlap assuming positive spin correlation}\label{SecLabelFKG}

Theorem \ref{ThMag} can be used to give a simplified proof of a slightly improved version of Lemma $2.6$ in \cite{RFIM}. Consider a finite set $V$ and consider any model with the Hamiltonian defined on $\sigma\in \{-1,+1\}^V$ that includes a Gaussian random field term,
\begin{equation}
H(\sigma) = H'(\sigma) + h \sum_{i\in V} g_i \sigma_i,
\label{rfHam}
\end{equation}
where $(g_i)$ are i.i.d. standard Gaussian random variables, independent of $H'(\sigma)$. For the next result, let us assume that the spins are positively correlated under the Gibbs measure,
\begin{equation}
\bigl\la\sigma _i\sigma_j \bigr\ra \geq \bigl\la\sigma_i \bigr\ra \bigl\la\sigma_j\bigr\ra \,\mbox{ for all $i,j\in V.$}
\label{FKG}
\end{equation}
For example, this was the case for the random field Ising model considered in \cite{RFIM} by the FKG inequality \cite{FKG}. Let
$$
R_{1,2} = \frac{1}{|V|}\sum_{i\in V} \sigma_i^1 \sigma_i^2
$$ 
denote the usual site overlap of two replicas. 
\begin{theorem} 
If the inequalities (\ref{FKG}) hold then
\begin{align}
\e \bigl\la \bigl(R_{1,2}-\la R_{1,2}\ra \bigr)^2 \bigr\ra\leq \frac{2}{h\sqrt{|V|}}.
\label{FKGconc}
\end{align}
\end{theorem}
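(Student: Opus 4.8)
The plan is to deduce this from Theorem~\ref{ThMag} applied to the total site magnetization of the single system. The random field term $h\sum_{i\in V} g_i\sigma_i$ is exactly of the form $\gamma Y(\sigma)$ with $\gamma=h$, index set $E=V$, functions $f_i(\sigma)=\sigma_i$, and the remaining part of the Hamiltonian absorbed into the reference measure $d\mu(\sigma)=\exp H'(\sigma)$, which by assumption is independent of the Gaussian field $(g_i)$. Thus Theorem~\ref{ThMag} is available for any choice of weights $a=(a_i)_{i\in V}$.

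First I would take $a_i=1/|V|$, so that $m(\sigma)=|V|^{-1}\sum_{i\in V}\sigma_i$ is the total magnetization, with $\|a\|_1=1$ and $\|a\|_2=|V|^{-1/2}$. Theorem~\ref{ThMag} then gives
\begin{equation*}
\e\bigl\la\bigl(m(\sigma)-\la m(\sigma)\ra\bigr)^2\bigr\ra\leq \frac{1}{h\sqrt{|V|}}.
\end{equation*}
Writing $c_{ij}=\la\sigma_i\sigma_j\ra-\la\sigma_i\ra\la\sigma_j\ra$ for the truncated Gibbs correlations, the left-hand side equals $\e\,|V|^{-2}\sum_{i,j}c_{ij}$, and the hypothesis (\ref{FKG}) says precisely that $c_{ij}\geq 0$ for all $i,j$. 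So this step yields control of the nonnegative quantity $\e\,|V|^{-2}\sum_{i,j}c_{ij}\leq (h\sqrt{|V|})^{-1}$.

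Next I would relate the Gibbs variance of $R_{1,2}$ to this quantity. Since $\sigma^1$ and $\sigma^2$ are independent replicas from the same Gibbs measure, $\la R_{1,2}\ra=|V|^{-1}\sum_i\la\sigma_i\ra^2$ and $\la R_{1,2}^2\ra=|V|^{-2}\sum_{i,j}\la\sigma_i\sigma_j\ra^2$, so that
\begin{equation*}
\e\bigl\la\bigl(R_{1,2}-\la R_{1,2}\ra\bigr)^2\bigr\ra
=\e\,\frac{1}{|V|^2}\sum_{i,j}\bigl(\la\sigma_i\sigma_j\ra^2-\la\sigma_i\ra^2\la\sigma_j\ra^2\bigr).
\end{equation*}
The key algebraic step is the factorization
\begin{equation*}
\la\sigma_i\sigma_j\ra^2-\la\sigma_i\ra^2\la\sigma_j\ra^2
=c_{ij}\bigl(\la\sigma_i\sigma_j\ra+\la\sigma_i\ra\la\sigma_j\ra\bigr),
\end{equation*}
in which the second factor is at most $2$. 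Using $c_{ij}\geq 0$ to bound each summand by $2c_{ij}$ and combining with the previous paragraph gives the claimed $2/(h\sqrt{|V|})$.

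The step I expect to carry the real content is the use of the positivity (\ref{FKG}): it is exactly what lets each term $c_{ij}\bigl(\la\sigma_i\sigma_j\ra+\la\sigma_i\ra\la\sigma_j\ra\bigr)$ be dominated by $2c_{ij}$ without sign cancellations spoiling the estimate, thereby reducing the self-averaging of the site overlap $R_{1,2}$ to the already-established self-averaging of the total magnetization. Everything else is routine bookkeeping: verifying the replica factorizations $\la R_{1,2}^2\ra=|V|^{-2}\sum_{i,j}\la\sigma_i\sigma_j\ra^2$ and the norm computations $\|a\|_1=1$, $\|a\|_2=|V|^{-1/2}$.
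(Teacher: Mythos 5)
Your proposal is correct and follows essentially the same route as the paper: the replica identity $\e\bigl\la(R_{1,2}-\la R_{1,2}\ra)^2\bigr\ra=\e\,|V|^{-2}\sum_{i,j}\bigl(\la\sigma_i\sigma_j\ra^2-\la\sigma_i\ra^2\la\sigma_j\ra^2\bigr)$, the difference-of-squares factorization with the second factor bounded by $2$, the use of (\ref{FKG}) to dominate each term by $2\bigl(\la\sigma_i\sigma_j\ra-\la\sigma_i\ra\la\sigma_j\ra\bigr)$, and finally Theorem~\ref{ThMag} applied to $m(\sigma)=|V|^{-1}\sum_i\sigma_i$ with $\gamma=h$, $\|a\|_1=1$, $\|a\|_2=|V|^{-1/2}$. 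All steps check out and match the paper's argument.
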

\noindent
In particular, this removes the factor $\sqrt{2+h^2}$ from the bound in Lemma 2.6 in \cite{RFIM}.

\noindent
\begin{proof}
To prove this, we start by copying the following equation from the proof of Lemma 2.6 in \cite{RFIM}:
\begin{align*}
\e \bigl( \bigl\la R_{1,2}^2\bigr\ra -\bigl\la R_{1,2}\bigr\ra^2 \bigr)
&= 
\frac{1}{|V|^2}\sum_{i,j} \e \bigl( \la \sigma_i \sigma_j\ra^2 -\la\sigma_i\ra^2 \la\sigma_j\ra^2 \bigr)
\\
&= \frac{1}{|V|^2}\sum_{i,j} \e \bigl| \la \sigma_i \sigma_j -\la\sigma_i\ra \la\sigma_j\ra \bigr| 
\bigl| \la \sigma_i \sigma_j + \la\sigma_i\ra \la\sigma_j\ra \bigr|
\\
&\leq \frac{2}{|V|^2}\sum_{i,j}\e \bigl| \la\sigma_i\sigma_j\ra-\la\sigma_i\ra\la\sigma_i\ra \bigr|
\\
&=\frac{2}{|V|^2}\sum_{i,j}\e \bigl( \la\sigma_i\sigma_j\ra-\la\sigma_i\ra\la\sigma_j\ra \bigr),
\end{align*}
where in the last step the positive correlation condition (\ref{FKG}) was used. Next, if we consider the magnetization $m(\sigma)=|V|^{-1}\sum_i\sigma_i$ then
\begin{align*}
\e\bigl\la \bigl(m(\sigma)-\bigl\la m(\sigma) \bigr\ra \bigr)^2\bigr\ra 
&=\frac{1}{|V|^2}\sum_{i,j}\e \bigl\la \bigl(\sigma_i-\la \sigma_i\ra \bigr) \bigl(\sigma_j-\la \sigma_j\ra \bigr)\bigr\ra
\\
&=\frac{1}{|V|^2}\sum_{i,j}\e \bigl(\la\sigma_i\sigma_j\ra-\la\sigma_i\ra\la\sigma_j\ra \bigr).
\end{align*}
Therefore, the inequalities (\ref{FKG}) imply that
\begin{align*}
\e \bigl( \bigl\la R_{1,2}^2\bigr\ra -\bigl\la R_{1,2}\bigr\ra^2 \bigr)
\leq 
2 \e\bigl\la \bigl(m(\sigma)-\bigl\la m(\sigma) \bigr\ra \bigr)^2\bigr\ra.
\end{align*}
Finally, using Theorem \ref{ThMag} with $\gamma = h$ and $Y(\sigma) = \sum_{i\in V} g_i \sigma_i$ implies (\ref{FKGconc}). 
\end{proof}

\section{Self-averaging of random fields}\label{SecLabHam}

Throughout this section, we will use the integration by parts formula 
\begin{equation}
\e H_k(g) F(g) = \e H_{k-1}(g) F'(g)
\label{Hermite}
\end{equation}
for the Hermite polynomials 
$$
H_k(x) = (-1)^k e^{x^2/2} \frac{d^k}{dx^k} e^{-x^2/2}
$$ 
of degree $k\geq 1$. In (\ref{Hermite}), $g$ is a standard Gaussian random variable and $F$ is a continuously differentiable function such that $F'$ is of moderate growth. The case $k=1$,
\begin{equation}
\e gF(g) = \e F'(g),
\end{equation} 
is often called the (usual) Gaussian integration by parts, and
\begin{equation}
\e (g^2-1) F(g) = \e g F'(g)
\label{Hermitek2}
\end{equation}
corresponds to the case $k=2$. 

\smallskip
Let $Y(\sigma)$ and $dG(\sigma)$ be as in Section \ref{SecLabelMag}. Consider a random field
\begin{equation}
W(\sigma) = \sum_{e\in E} a_e g_e f_e(\sigma)
\label{Wsigma}
\end{equation}
for arbitrary constants $a_e$ for $e\in E$. Denote 
$$
\|a\|_2 = \Bigl(\sum_{e\in E} a_e^2\Bigr)^{1/2}
\,\,\mbox{ and }\,\,
\|a\|_1 = \sum_{e\in E} |a_e|.
$$
We will start with the following.
\begin{theorem}\label{ThHSA0}
If $\gamma>0$ then
\begin{equation}
\e \bigl( \bigl\la W(\sigma)^2\bigr\ra - \bigl\la W(\sigma)\bigr\ra^2 \bigr)
\leq
\frac{\sqrt{2}}{\gamma} \|a\|_2\|a\|_1.
\label{ThHSAeq0}
\end{equation}
\end{theorem}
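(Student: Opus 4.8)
The plan is to follow the same pattern as in the proof of Theorem \ref{ThMag}, but since the field $W(\sigma)$ now contains the Gaussian variables $g_e$ explicitly, the first-order integration by parts used there will not isolate the quantity we want. Instead I would pair the quenched average $\la W(\sigma)\ra$ with the second-order Hermite object $h=\sum_{e\in E}a_e(g_e^2-1)$ and use the case $k=2$ of (\ref{Hermite}), namely (\ref{Hermitek2}) in the form $\e(g_e^2-1)F=\e\, g_e\partial_{g_e}F$, applied to the auxiliary quantity $\e\bigl[h\,\la W(\sigma)\ra\bigr]$.

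First I would compute, for each $e$, the derivative $\partial_{g_e}\la W(\sigma)\ra$. Writing $\la W(\sigma)\ra=\sum_{e'}a_{e'}g_{e'}\la f_{e'}(\sigma)\ra$, this derivative has an explicit part $a_e\la f_e(\sigma)\ra$, coming from the coefficient $g_e$, and a Gibbs part $\gamma\bigl(\la W(\sigma)f_e(\sigma)\ra-\la W(\sigma)\ra\la f_e(\sigma)\ra\bigr)$, coming from differentiating the measure. Applying (\ref{Hermitek2}) termwise and summing against $a_e$, the key point is that $\sum_e a_e g_e\bigl(\la W f_e\ra-\la W\ra\la f_e\ra\bigr)$ collapses to the Gibbs variance $\la W^2\ra-\la W\ra^2$, precisely because $\sum_e a_e g_e f_e=W$. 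This yields the identity
\[ \e\bigl[h\,\la W(\sigma)\ra\bigr]=\gamma\,\e\bigl(\la W^2\ra-\la W\ra^2\bigr)+\sum_{e}a_e^2\,\e\bigl[g_e\la f_e(\sigma)\ra\bigr]. \]
The remainder term is harmless: one more integration by parts gives $\e[g_e\la f_e\ra]=\gamma\,\e\bigl(\la f_e^2\ra-\la f_e\ra^2\bigr)\ge 0$, so it is nonnegative and can simply be discarded, leaving $\gamma\,\e\bigl(\la W^2\ra-\la W\ra^2\bigr)\le\e\bigl[h\,\la W(\sigma)\ra\bigr]$.

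Finally I would bound the right-hand side directly. Since $|\la f_e(\sigma)\ra|\le 1$, we have $\bigl|\e[h\,\la W\ra]\bigr|\le\sum_e|a_e|\,\e\bigl[|h|\,|g_e|\bigr]$, and Cauchy--Schwarz together with $\e h^2=2\|a\|_2^2$ (the cross terms vanish by independence and $\e(g_e^2-1)=0$) and $\e g_e^2=1$ gives $\e\bigl[|h|\,|g_e|\bigr]\le\sqrt{2}\,\|a\|_2$. Summing over $e$ produces $\sqrt{2}\,\|a\|_1\|a\|_2$, and dividing by $\gamma$ finishes the argument.

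The step requiring the most care is the choice of the auxiliary object. The naive analogue of Theorem \ref{ThMag}, pairing $\la W\ra$ with $g=\sum_e a_e g_e$ through first-order integration by parts, produces a covariance of $W$ with the magnetization $\sum_e a_e f_e(\sigma)$ rather than the variance of $W$ itself, and leaves a term of order $\|a\|_2^2$ that is not sign-definite. Recognizing that the second Hermite polynomial is the right object -- so that the explicit Gaussian sitting inside $W$ and the Gaussian brought down from the measure combine into exactly $\la W^2\ra-\la W\ra^2$ -- is the crux, and checking that the leftover term is a nonnegative (hence discardable) quantity is what makes the clean $1/\gamma$ bound possible.
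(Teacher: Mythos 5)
Your proposal is correct and follows essentially the same route as the paper: apply the second-order Hermite integration by parts $\e(g_e^2-1)F=\e\,g_e\partial_{g_e}F$ to $\la W(\sigma)\ra$, sum against $a_e$ so the Gibbs-derivative terms collapse to $\gamma\,\e(\la W^2\ra-\la W\ra^2)$, discard the nonnegative remainder $\sum_e a_e^2\,\e[g_e\la f_e\ra]$, and bound $\e[h\,\la W\ra]$ by $\sqrt{2}\,\|a\|_2\|a\|_1$ via Cauchy--Schwarz. The only (immaterial) difference is that you apply Cauchy--Schwarz termwise to $\e[|h|\,|g_e|]$ rather than globally to $(\e h^2)^{1/2}(\e\la W\ra^2)^{1/2}$; both yield the same constant.
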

\begin{proof} 
Using the integration by parts formula in (\ref{Hermitek2}), we can write
$$
\e (g_e^2-1)\bigl\la W(\sigma)\bigr\ra =  \e g_e \bigl\la a_e f_e(\sigma)\bigr\ra
+ \gamma\, \e g_e \bigl\la W(\sigma) f_e(\sigma)\bigr\ra
- \gamma\, \e g_e \bigl\la W(\sigma^1) f_e(\sigma^2)\bigr\ra.
$$
Multiplying both sides by $a_e$ and summing over $e\in E$ gives
$$
\e \sum_{e\in E} a_e (g_e^2-1)\bigl\la W(\sigma)\bigr\ra = 
\sum_{e\in E} a_e^2 \e \bigl\la g_e f_e(\sigma)\bigr\ra 
+ \gamma\, \e \bigl\la W(\sigma)^2\bigr\ra - \gamma\, \e \bigl\la W(\sigma)\bigr\ra^2
$$
and, therefore,
\begin{align}\label{eq0a}
\gamma\, \e \bigl( \bigl\la W(\sigma)^2\bigr\ra - \bigl\la W(\sigma)\bigr\ra^2 \bigr)
=
\e \sum_{e\in E} a_e (g_e^2-1)\bigl\la W(\sigma)\bigr\ra - \sum_{e\in E} a_e^2 \e \bigl\la g_e f_e(\sigma)\bigr\ra.
\end{align}
By the usual Gaussian integration by parts,
$$
\e \bigl\la g_e f_e(\sigma)\bigr\ra = \gamma\, \e \bigl( \bigl\la f_e(\sigma)^2\bigr\ra -\bigl\la f_e(\sigma)\bigr\ra^2\bigr)
\geq 0,
$$
so omitting the last sum in (\ref{eq0a}) yields an upper bound
$$
\gamma\, \e \bigl( \bigl\la W(\sigma)^2\bigr\ra - \bigl\la W(\sigma)\bigr\ra^2 \bigr)
\leq
\e \sum_{e\in E} a_e (g_e^2-1) \bigl\la W(\sigma)\bigr\ra.
$$
Let us note that
\begin{align*}
\e \Bigl( \sum_{e\in E} a_e (g_e^2-1)\Bigr)^2&=\sum_{e,e'\in E}a_e a_{e'}\e(g_e^2-1)(g_{e'}^2-1)=2\|a\|_2^2, 
\end{align*}
since the terms for $e\not = e'$ are equal to $0$ and $\e (g_e^2-1)^2 = 2$. By the Cauchy-Schwarz inequality,
$$
\gamma\, \e \bigl( \bigl\la W(\sigma)^2\bigr\ra - \bigl\la W(\sigma)\bigr\ra^2 \bigr)
\leq
\sqrt{2}\|a\|_2 \bigl(\e \bigl\la W(\sigma) \bigr\ra^2\bigr)^{1/2}.
$$
Finally, using that $|W(\sigma)| \leq \sum_{e\in E} |a_e g_e|$ and $\e ( \sum_{e\in E} |a_e g_e|)^2 \leq \|a\|_1^2$ finishes the proof.
\end{proof}

\begin{example}\rm
If in (\ref{ThHSAeq}) we take all $a_e=1$, we get
\begin{equation}
\e \bigl( \bigl\la Y(\sigma)^2\bigr\ra - \bigl\la Y(\sigma)\bigr\ra^2 \bigr)
\leq
\frac{\sqrt{2}|E|^{3/2}}{\gamma}.
\label{ThHSAeq2}
\end{equation}
Applying this to the Hamiltonian (\ref{rfHam}) with the Gaussian random field gives a new proof of Lemma 2.9 in \cite{RFIM}. If in Theorem \ref{ThHSA} we take $E=V$, $\gamma = h$, for $i\in V$ take $f_i(\sigma) = \sigma_i$, and divide both sides of (\ref{ThHSAeq}) by $|V|^2$, then the normalized random field
$$
h(\sigma) = \frac{1}{|V|} \sum_{i\in V} g_i \sigma_i
$$
satisfies
\begin{equation}
\e \bigl( \bigl\la h(\sigma)^2\bigr\ra - \bigl\la h(\sigma)\bigr\ra^2 \bigr)
\leq
\frac{\sqrt{2}}{h\sqrt{|V|}}.
\label{eqlast}
\end{equation}
This inequality was used in \cite{RFIM} to establish `half' of the Ghirlanda-Guerra identities for the first moment of the overlaps, with the other half following from the existence of the limit for the free energy.
\end{example}

Next, we will show how one can push the above proof even further to improve the bound for small values of $\gamma$.
\begin{theorem}\label{ThHSA}
If $\gamma>0$ then
\begin{equation}
\e \bigl( \bigl\la W(\sigma)^2\bigr\ra - \bigl\la W(\sigma)\bigr\ra^2 \bigr)
\leq
\|a\|_2^2 + \sqrt{2}\|a\|_2\|a\|_1.
\label{ThHSAeq}
\end{equation}
\end{theorem}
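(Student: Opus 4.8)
The plan is to build on the Gaussian integration-by-parts computation already used for Theorem \ref{ThHSA0}, noting first that for $\gamma\ge 1$ the desired inequality is immediate from that theorem: its bound $\frac{\sqrt2}{\gamma}\|a\|_2\|a\|_1$ is then at most $\sqrt2\,\|a\|_2\|a\|_1\le\|a\|_2^2+\sqrt2\,\|a\|_2\|a\|_1$. The whole point of (\ref{ThHSAeq}) is therefore the small-$\gamma$ regime, where dividing the master identity by $\gamma$ (as in the proof of Theorem \ref{ThHSA0}) is exactly what one must avoid. So instead of controlling $\gamma\,\e(\la W^2\ra-\la W\ra^2)$ and then dividing, I would estimate the quenched variance directly, with no factor of $\gamma$ in front.

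Writing out the square and using $W(\sigma)=\sum_e a_e g_e f_e(\sigma)$, the variance becomes a double sum over bonds. With the Gibbs covariance $c_{e,e'}=\la f_e(\sigma)f_{e'}(\sigma)\ra-\la f_e(\sigma)\ra\la f_{e'}(\sigma)\ra$,
\[
\e\bigl(\la W(\sigma)^2\ra-\la W(\sigma)\ra^2\bigr)=\sum_{e,e'\in E}a_e a_{e'}\,\e\bigl[g_e g_{e'}\,c_{e,e'}\bigr].
\]
I would then split $g_e g_{e'}=\delta_{e,e'}+(g_e g_{e'}-\delta_{e,e'})$. The diagonal piece $\sum_{e}a_e^2\,\e\,c_{e,e}$ carries no $\gamma$, and since $|f_e|\le 1$ forces $0\le c_{e,e}=\la f_e^2\ra-\la f_e\ra^2\le 1$, it is bounded by $\sum_e a_e^2=\|a\|_2^2$. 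This already produces the first summand of (\ref{ThHSAeq}), and it is precisely the self-interaction term that was discarded (as a nonnegative quantity) in the proof of Theorem \ref{ThHSA0}.

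What remains is to bound the centered cross term $\sum_{e,e'}a_e a_{e'}\,\e[(g_e g_{e'}-\delta_{e,e'})c_{e,e'}]$ by $\sqrt2\,\|a\|_2\|a\|_1$. The natural tool is again the Hermite identity (\ref{Hermitek2}): the random variable $\sum_e a_e(g_e^2-1)$ has second moment $2\|a\|_2^2$, so a Cauchy--Schwarz step should pair the factor $\sqrt2\,\|a\|_2$ with $\bigl(\e\la W(\sigma)\ra^2\bigr)^{1/2}\le\|a\|_1$, exactly the two quantities that generated the bound in Theorem \ref{ThHSA0}. Summing the diagonal and cross estimates would then give (\ref{ThHSAeq}).

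The step I expect to be the real obstacle is this last one: making the cross-term estimate uniform in $\gamma$. The difficulty is that if one integrates by parts completely, the cross term collapses back to $\gamma$ times the full variance plus the diagonal, which merely reproduces the $1/\gamma$ bound; the refinement must peel off the diagonal self-interaction first and apply Cauchy--Schwarz only to what is left, so that the $\gamma$-free term $\|a\|_2^2$ and the $\gamma$-insensitive term $\sqrt2\,\|a\|_2\|a\|_1$ enter additively rather than through a single division by $\gamma$. Getting the constant in the cross term to come out as exactly $\sqrt2\,\|a\|_2\|a\|_1$, without leaking a $1/\gamma$, is the delicate point.
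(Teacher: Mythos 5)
Your decomposition correctly isolates the diagonal contribution $\sum_e a_e^2\,\e\, c_{e,e}\le\|a\|_2^2$, and you correctly point at where the difficulty sits --- but the proposal does not resolve it, and as sketched the crucial step fails. The remainder $\sum_{e,e'}a_ea_{e'}\,\e\bigl[(g_eg_{e'}-\delta_{e,e'})c_{e,e'}\bigr]$ has no product structure: $c_{e,e'}$ couples the two indices, so it is not of the form $\e[XY]$ with $X=\sum_ea_e(g_e^2-1)$ and some $Y$ controlled by $\la W(\sigma)\ra$, and the Cauchy--Schwarz pairing you describe cannot be applied to it. The only identity that does produce the pairing of $\sum_ea_e(g_e^2-1)$ against $\la W(\sigma)\ra$ is (\ref{eq0a}) itself, which, as you note, returns $\gamma$ times the full variance plus a diagonal term --- circular. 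If instead you integrate the remainder by parts, every derivative of a Gibbs average costs a factor of $\gamma$, and you land on the ``trivial'' bound $\|a\|_2^2+C\gamma^2\|a\|_1^2$ mentioned in the Remark after the theorem; combined with Theorem \ref{ThHSA0} for $\gamma\ge1$ this still leaves an uncovered intermediate range, roughly $(\|a\|_2/\|a\|_1)^{1/2}\lesssim\gamma\le1$ (for $a_e\equiv1$ this is $N^{-1/4}\lesssim\gamma\le 1$), where neither estimate implies (\ref{ThHSAeq}).

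The resolution is the opposite of your opening premise: one does \emph{not} avoid dividing by $\gamma$; one arranges for every term on the right-hand side of (\ref{eq0a}) to carry its own explicit factor of $\gamma$ before dividing. Concretely, expand $\la W(\sigma)\ra=\sum_{e'}a_{e'}g_{e'}\la f_{e'}(\sigma)\ra$ inside $\e\sum_ea_e(g_e^2-1)\la W(\sigma)\ra$ and split into $e=e'$ and $e\ne e'$. For $e=e'$, one application of (\ref{Hermitek2}) produces a term cancelling $\sum_ea_e^2\,\e\la g_ef_e(\sigma)\ra$ plus $\gamma\sum_ea_e^2\,\e\, g_e^2F_e\le\gamma\|a\|_2^2$, where $F_e=\gamma^{-1}\partial_{g_e}\la f_e(\sigma)\ra\in[0,1]$ \emph{uniformly in} $\gamma$. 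For $e\ne e'$, one ordinary integration by parts in $g_{e'}$ gives $\gamma\sum_{e'}a_{e'}\,\e\bigl(\sum_{e\ne e'}a_e(g_e^2-1)\bigr)F_{e'}$, and only now does Cauchy--Schwarz (applied to $\sum_{e\ne e'}a_e(g_e^2-1)$ against $F_{e'}\le1$, not against $\la W(\sigma)\ra$) give $\sqrt{2}\,\gamma\|a\|_2\|a\|_1$. Every term is proportional to $\gamma$, so dividing is harmless and yields (\ref{ThHSAeq}) for all $\gamma>0$.
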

\noindent
\textbf{Remark.} When all $a_e=1$, the bound becomes $|E| +\sqrt{2}|E|^{3/2}$, which is an improvement over (\ref{ThHSAeq2}) for small values of $\gamma$. In fact, for very small values of $\gamma$, if one simply integrates $\e \la W(\sigma)^2\ra $ by parts to obtain a trivial bound $\|a\|_2^2 + C \gamma^2 \|a\|_1^2$, this gives further improvement for very small values of $\gamma$.

\smallskip
\noindent
\begin{proof} To prove this inequality, let us look at the right hand side of (\ref{eq0a}) more closely. First,
\begin{align*}
\e \sum_{e\in E}a_e(g_e^2-1) \bigl\la W(\sigma) \bigr\ra=\e\sum_{e,e'\in E}a_ea_{e'}(g_e^2-1)g_{e'}
\bigl\la f_{e'}(\sigma) \bigr\ra.
\end{align*}
It will be convenient to introduce the notation
\begin{equation}
F_{e}= \frac{1}{\gamma}\frac{\partial}{\partial g_e} \bigl\la f_{e}(\sigma)\bigr\ra
= \bigl\la f_{e}(\sigma)^2 \bigr\ra- \bigl\la f_{e}(\sigma) \bigr\ra^2.
\label{Cee1}
\end{equation}
For the terms $e=e'$, using the formula (\ref{Hermitek2}) for the factors $g_e^2-1$ gives
\begin{align}
\sum_{e\in E}a_e^2 \e (g_e^2-1)g_e \bigl\la f_e(\sigma) \bigr\ra
&=\sum_{e\in E}a_e^2 \e  g_e \bigl\la f_e(\sigma) \bigr\ra
+\gamma\sum_{e\in E}a_e^2 \,\e g_e^2 F_e.
\label{Sum01}
\end{align}
Since $0\leq F_e\leq 1$, the second sum is bounded by $\gamma \|a\|_2^2$. The first sum cancels out the last sum in (\ref{eq0a}), so
\begin{align}
\gamma\, \e \bigl( \bigl\la W(\sigma)^2\bigr\ra - \bigl\la W(\sigma)\bigr\ra^2 \bigr)
\leq
\e\sum_{e\not =e'}a_ea_{e'}(g_e^2-1)g_{e'}\bigl\la f_{e'}(\sigma) \bigr\ra
+
\gamma \|a\|_2^2.
\label{SumSumineq}
\end{align}
If in the first term on the right hand side we use the usual Gaussian integration by parts with respect to $g_e'$, it can be rewritten as
\begin{align}
\sum_{e'\in E}a_{e'} \,\e \Bigl(\sum_{e\not =e'}a_e(g_e^2-1) \Bigr)g_{e'}\bigl\la f_{e'}(\sigma) \bigr\ra
=
\gamma \sum_{e'\in E}a_{e'} \,\e \Bigl(\sum_{e\not =e'}a_e(g_e^2-1) \Bigr) F_{e'}.
\label{Sum02}
\end{align}
Since $0\leq F_{e'}\leq 1$ and
$$
\e \Bigl(\sum_{e\not =e'}a_e(g_e^2-1) \Bigr)^2 \leq 2\|a\|_2^2,
$$
by the Cauchy-Schwarz inequality, we can bound the last sum by $\sqrt{2} \gamma \|a\|_2 \|a\|_1$ and this finishes the proof.
\end{proof} 

\begin{example}\rm
Using the bound (\ref{ThHSAeq}), one can supplement (\ref{eqlast}) in the Example 6 with
\begin{equation}
\e \bigl( \bigl\la h(\sigma)^2\bigr\ra - \bigl\la h(\sigma)\bigr\ra^2 \bigr)
\leq
\frac{1}{|V|} + \frac{\sqrt{2}}{\sqrt{|V|}}
\label{eqlast2}
\end{equation}
for small values of $h$. 
\end{example}

\smallskip
There is a natural generalization of the previous results to the random field
\begin{equation}
W(\sigma) = \sum_{e\in E}a_e H_k(g_e) f_e(\sigma),
\end{equation}
where, as above, $H_k$ is the Hermite polynomial of degree $k\geq 0.$ Let us denote
\begin{equation}
F_e^{(k)} = \frac{1}{\gamma^k}\frac{\partial^k}{\partial g_e^k} \bigl\la f_e(\sigma)\bigr\ra,
\end{equation}
and let $C_k$ be a constant such that $|F_e^{(k)}| \leq C_k$ with probability one. For example, $F_e^{(0)}=\la f_e(\sigma)\ra$ and $C_0=1$ (this was used in \eqref{eq:add}) and $F_e^{(1)}=\la f_{e}(\sigma)^2 \ra- \la f_{e}(\sigma) \ra^2$ with $C_1=1$, which already appeared in (\ref{Cee1}). The following analogue of Theorems \ref{ThMag}, \ref{ThHSA0} and \ref{ThHSA} holds in this case.
\begin{theorem}\label{ThLast}
We have that for $k\geq 0,$
\begin{align}\label{ineq1}
\e \bigl( \bigl\la W(\sigma)^2\bigr\ra - \bigl\la W(\sigma)\bigr\ra^2 \bigr)
&\leq 
\frac{\sqrt{k! (k+1)!}}{\gamma}  \|a\|_1\|a\|_2
\end{align}
and for $k\geq 1,$
\begin{align}
\label{ineq2}
\e \bigl( \bigl\la W(\sigma)^2\bigr\ra - \bigl\la W(\sigma)\bigr\ra^2 \bigr)
&\leq C_k \sqrt{(k+1)!} \gamma^{k-1} \|a\|_1\|a\|_2 + k! \|a\|_2^2.
\end{align}
\end{theorem}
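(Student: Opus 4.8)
The plan is to extend the integration-by-parts computation behind the proofs of Theorems~\ref{ThMag}, \ref{ThHSA0} and~\ref{ThHSA} from the degrees $k=0,1$ to general $k$, replacing the special cases (\ref{Hermitek2}) by the full Hermite formula (\ref{Hermite}). Throughout write $V=\e(\la W(\sigma)^2\ra-\la W(\sigma)\ra^2)$, note that $\la W(\sigma)\ra=\sum_{e}a_eH_k(g_e)\la f_e(\sigma)\ra$, and set $U=\sum_{e\in E}a_eH_{k+1}(g_e)$. The first step is to establish the master identity
\[
\gamma V=\e\bigl[U\,\la W(\sigma)\ra\bigr]-S,\qquad S:=k\sum_{e\in E}a_e^2\,\e\bigl[H_k(g_e)H_{k-1}(g_e)\la f_e(\sigma)\ra\bigr].
\]
This comes from applying (\ref{Hermite}) in the form $\e[H_{k+1}(g_e)\Phi]=\e[H_k(g_e)\partial_{g_e}\Phi]$ to $\Phi=a_e\la W(\sigma)\ra$ and summing over $e$, using $\partial_{g_e}\la W(\sigma)\ra=a_ekH_{k-1}(g_e)\la f_e(\sigma)\ra+\gamma(\la W(\sigma)f_e(\sigma)\ra-\la W(\sigma)\ra\la f_e(\sigma)\ra)$ together with $\sum_e a_eH_k(g_e)\la W(\sigma)f_e(\sigma)\ra=\la W(\sigma)^2\ra$ and $\sum_e a_eH_k(g_e)\la W(\sigma)\ra\la f_e(\sigma)\ra=\la W(\sigma)\ra^2$, recovering (\ref{eq0a}) in the case $k=1$.

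For (\ref{ineq1}) I would first show $S\ge 0$ and then simply drop it. Setting $T_k^{(e)}=\e[H_k(g_e)H_{k-1}(g_e)\la f_e(\sigma)\ra]$, a single application of (\ref{Hermite}) to the factor $H_k(g_e)$ gives the recursion $T_k^{(e)}=(k-1)T_{k-1}^{(e)}+\gamma\,\e[H_{k-1}(g_e)^2F_e^{(1)}]$, whose base case $T_1^{(e)}=\gamma\,\e F_e^{(1)}$ is nonnegative; since $F_e^{(1)}=\la f_e^2\ra-\la f_e\ra^2\ge 0$, induction yields $T_k^{(e)}\ge 0$ and hence $S\ge 0$. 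Then $\gamma V\le \e[U\la W(\sigma)\ra]\le (\e U^2)^{1/2}(\e\la W(\sigma)\ra^2)^{1/2}$, and the two factors are controlled by $\e U^2=(k+1)!\,\|a\|_2^2$ (orthogonality of the $H_{k+1}(g_e)$) and, using $|\la f_e(\sigma)\ra|\le 1$ together with $(\e|H_k(g)|)^2\le \e H_k(g)^2=k!$, by $\e\la W(\sigma)\ra^2\le k!\,\|a\|_1^2$; multiplying these gives exactly $\gamma V\le \sqrt{k!(k+1)!}\,\|a\|_1\|a\|_2$.

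For (\ref{ineq2}) I would instead keep $S$ and split $\e[U\la W(\sigma)\ra]$ into its diagonal ($e=e'$) and off-diagonal ($e\ne e'$) parts. On the diagonal the same one-step recursion gives $\e[H_{k+1}(g_e)H_k(g_e)\la f_e\ra]=kT_k^{(e)}+\gamma\,\e[H_k(g_e)^2F_e^{(1)}]$, so the $kT_k^{(e)}$ pieces cancel against $S$ and the diagonal contributes exactly $\gamma\sum_e a_e^2\,\e[H_k(g_e)^2F_e^{(1)}]\le \gamma\,k!\,\|a\|_2^2$, using $0\le F_e^{(1)}\le 1$ and $\e H_k(g)^2=k!$. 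On the off-diagonal, integrating by parts $k$ times in $g_{e'}$ turns $\la f_{e'}(\sigma)\ra$ into $\gamma^kF_{e'}^{(k)}$, producing $\gamma^k\sum_{e\ne e'}a_ea_{e'}\e[H_{k+1}(g_e)F_{e'}^{(k)}]$; bounding $|F_{e'}^{(k)}|\le C_k$ and applying Cauchy--Schwarz with $\e(\sum_{e\ne e'}a_eH_{k+1}(g_e))^2\le (k+1)!\,\|a\|_2^2$ controls this by $C_k\sqrt{(k+1)!}\,\gamma^k\|a\|_1\|a\|_2$. Dividing the resulting bound on $\gamma V$ by $\gamma$ yields (\ref{ineq2}).

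The main obstacle is the nonnegativity $S\ge 0$ and the accompanying diagonal cancellation: for $k=0,1$ the term $S$ is either absent or manifestly nonnegative, but for general $k$ neither property is apparent, and a naive expansion of $T_k^{(e)}$ into cumulants $F_e^{(2j+1)}$ of indefinite sign is misleading. The recursion above is the device that makes both the sign of $S$ and its cancellation against the diagonal transparent. The only remaining point needing care is the legitimacy of the repeated integration by parts, i.e. that $\la f_e(\sigma)\ra$ is smooth with derivatives of moderate growth in each $g_e$, which holds because $f_e$ is bounded and the Gibbs average depends on $g_e$ only through the factor $\exp(\gamma g_e f_e(\sigma))$.
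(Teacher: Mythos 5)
Your proposal is correct and follows essentially the same route as the paper's proof: the same master identity from Hermite integration by parts, the same inductive recursion giving the sign of the diagonal correction term $S$ (and hence \eqref{ineq1}), and the same diagonal/off-diagonal split with cancellation against $S$ for \eqref{ineq2}. The only cosmetic difference is that the paper phrases the recursion in terms of $\e\, H_k(g_e)H_k'(g_e)\bigl\la f_e(\sigma)\bigr\ra = k\,T_k^{(e)}$ rather than $T_k^{(e)}$ itself.
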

\begin{proof}
Using the integration by parts formula (\ref{Hermite}),
\begin{align*}
&\e H_{k+1}(g_e)\la W(\sigma)\ra = \e H_k(g_e)\frac{\partial}{\partial g_e} \bigl\la W(\sigma) \bigr\ra\\
&=
a_e\e H_k(g_e)H_{k}'(g_e) \bigl\la f_e(\sigma) \big\ra
+\gamma\, \e H_k(g_e) \bigl( \bigl\la W(\sigma)f_e(\sigma) \bigr\ra- \bigl\la W(\sigma^1)f_e(\sigma^2)\bigr\ra \bigr).
\end{align*}
Multiplying both sides by $a_e$ and summing over $e\in E$ gives
\begin{align*}
\e \sum_{e\in E}a_e H_{k+1}(g_e) \bigl\la W(\sigma) \bigr\ra
&=\e \sum_{e\in E}a_e^2H_k(g_e)H_{k}'(g_e) \bigl\la f_e(\sigma) \bigr\ra
+\gamma\, \e \bigl( \bigl\la W(\sigma)^2\bigr\ra - \bigl\la W(\sigma)\bigr\ra^2 \bigr)
\end{align*}
and, therefore,
\begin{align}
\gamma\, \e \bigl( \bigl\la W(\sigma)^2\bigr\ra - \bigl\la W(\sigma)\bigr\ra^2 \bigr)
= &\,\,
\e \sum_{e\in E}a_eH_{k+1}(g_e) \bigl\la W(\sigma)\bigr\ra
\nonumber
\\
&-\, \e \sum_{e\in E}a_e^2H_k(g_e)H_{k}'(g_e) \bigl\la f_e(\sigma) \bigr\ra.
\label{eq1ag}
\end{align}
As above, the first term can be bounded as follows,
\begin{align*}
\e \sum_{e\in E}a_eH_{k+1}(g_e) \bigl\la W(\sigma)\bigr\ra
&\leq 
\Bigl(\e\Bigl(\sum_{e\in E}a_eH_{k+1}(g_e)\Bigr)^2\Bigr)^{1/2}\Bigl(\e \bigl\la W(\sigma) \bigr\ra^2\Bigr)^{1/2}
\\
&\leq 
\Bigl(\sum_{e\in E}a_e^2 \e H_{k+1}(g_e)^2\Bigr)^{1/2} \Bigl(\e \Bigl(\sum_{e\in E}|a_e||H_k(g_e)|\Bigr)^2\Bigr)^{1/2}
\\
&\leq \sqrt{(k+1)!} \|a\|_2 \sqrt{k!}\|a\|_1,
\end{align*}
where we used that $\e H_\ell(g)^2 = \ell!$ for $\ell=k, k+1$. This will finish the proof of \eqref{ineq1} if we can show that the second term is negative. Using (\ref{Hermite}) for the factor $H_k(g_e)$ gives
$$
\e H_k(g_e) H_{k}'(g_e) \bigl\la f_e(\sigma)\bigr\ra
=
\e H_{k-1}(g_e) H_{k}''(g_e) \bigl\la f_e(\sigma)\bigr\ra
+
\gamma\, \e H_{k-1}(g_e) H_{k}'(g_e) F_e^{(1)}.
$$
Using a well-known relationship $H_k(x)' = k H_{k-1}(x)$, we can rewrite this as
$$
\e H_k(g_e) H_{k}'(g_e) \bigl\la f_e(\sigma)\bigr\ra
=
k \e H_{k-1}(g_e) H_{k-1}'(g_e) \bigl\la f_e(\sigma)\bigr\ra
+
k \gamma\, \e H_{k-1}(g_e)^2 F_e^{(1)}.
$$
Finally, using that $F_e^{(1)} \geq 0$ and proceeding by induction on $k$, we get
$$
\e H_k(g_e) H_{k}'(g_e) \bigl\la f_e(\sigma)\bigr\ra
\geq 
k \e H_{k-1}(g_e) H_{k-1}'(g_e) \bigl\la f_e(\sigma)\bigr\ra
\geq 0.
$$

\smallskip
To obtain \eqref{ineq2}, we need further calculations for the first term on the right-hand side of \eqref{eq1ag}. Let us begin by writing
\begin{align}
\e \sum_{e\in E}a_eH_{k+1}(g_e) \bigl\la W(\sigma) \bigr\ra
= &\,\,
\e\sum_{e\neq e'}a_e a_{e'}H_{k+1}(g_e)H_k(g_{e'}) \bigl\la f_{e'}(\sigma)\bigr\ra
\nonumber
\\
&+\,
\e\sum_{e\in E} a_e^2 H_{k+1}(g_e)H_k(g_{e}) \bigl\la f_{e}(\sigma)\bigr\ra.
\label{drepr}
\end{align}
Using the integration by parts formula (\ref{Hermite}) repeatedly, for any $e\neq e'$ we get
\begin{align*}
\e H_{k+1}(g_e)H_k(g_{e'}) \bigl\la f_{e'}(\sigma)\bigr\ra
&=
\e H_{k+1}(g_e)\frac{\partial^k}{\partial g_{e'}^{k}} \bigl\la f_{e'}(\sigma)\bigr\ra
=
\gamma^k \e H_{k+1}(g_e) F_{e'}^{(k)}.
\end{align*}
Using (\ref{Hermite}) once for the factor $H_{k+1}(g_e)$, for $e=e'$ we get
\begin{align*}
\e H_{k+1}(g_e)H_k(g_{e}) \bigl\la f_{e}(\sigma)\bigr\ra
&=
\e H_k(g_e)H_k'(g_e)\bigl\la f_e(\sigma) \bigr\ra
+ \gamma\, \e H_{k}(g_e)^2F_e^{(1)}.
\end{align*}
Therefore, we can rewrite (\ref{drepr}) as
\begin{align*}
\e \sum_{e\in E}a_eH_{k+1}(g_e) \bigl\la W(\sigma) \bigr\ra
& = 
\gamma^k \e \sum_{e\not =e'}a_e a_{e'}H_{k+1}(g_e) F_{e'}^{(k)}
\\
& \quad + 
\e \sum_{e\in E}a_e^2H_k(g_e)H_k'(g_e) \bigl\la f_e(\sigma)\bigr\ra
+\gamma\, \e \sum_{e\in E}a_e^2 H_{k}(g_e)^2F_e^{(1)}.
\end{align*}
Plugging this into \eqref{eq1ag} and dividing both sides by $\gamma$,
\begin{align*}
\e \bigl( \bigl\la W(\sigma)^2\bigr\ra - \bigl\la W(\sigma)\bigr\ra^2 \bigr)
& = 
\gamma^{k-1} \e \sum_{e\not =e'}a_e a_{e'}H_{k+1}(g_e) F_{e'}^{(k)}
+
\e \sum_{e\in E}a_e^2 H_{k}(g_e)^2F_e^{(1)}.
\end{align*}
Since $F_e^{(1)}\leq 1$, the second term can be bounded by 
\begin{align*}
\e \sum_{e\in E}a_e^2 H_{k}(g_e)^2F_e^{(1)} \leq k! \|a\|_2^2.
\end{align*}
To bound the first term, let us rewrite it as
$$
\gamma^{k-1} \e \sum_{e\not =e'}a_e a_{e'}H_{k+1}(g_e) F_{e'}^{(k)}
=
\gamma^{k-1} \sum_{e'\in E} a_{e'} \e \, F_{e'}^{(k)} \sum_{e\not = e'}a_e H_{k+1}(g_e).
$$
Since, for any fixed $e'\in E$, $|F_{e'}^{(k)}|\leq C_k$ and 
$$
\e \Bigl(\sum_{e\not = e'}a_eH_{k+1}(g_e)\Bigr)^2
\leq 
(k+1)!\|a\|_2^2,
$$
this can be bounded by $C_k \sqrt{(k+1)!} \gamma^{k-1} \|a\|_2 \|a\|_1$, which finishes the proof of \eqref{ineq2}.
\end{proof}

\end{document}